\newtheorem{thm}{Theorem}[section]
\newtheorem{cor}[thm]{Corollary}
\newtheorem{lem}[thm]{Lemma}
\newtheorem{prop}[thm]{Proposition}
\theoremstyle{definition}
\newtheorem{defin}[thm]{Definition}
\numberwithin{equation}{section}
\newcommand{\red}{\color{black}}
\newcommand{\E}{\mathbb{E}}
\newcommand{\Prob}{\mathbb{P}}
\newcommand{\Pfs}[1][]{\ensuremath{\mathbb{P}_{#1}\text{-a.s.}}}
\newcommand{\N}{\mathbb{N}}
\newcommand{\R}{\mathbb{R}}
\newcommand{\C}{\mathbb{C}}
\newcommand{\supp}{\mathrm{supp}\,  }
\renewcommand{\epsilon}{\varepsilon}
\renewcommand{\rho}{\varrho}
\newcommand{\abs}[1]{\ensuremath{\left| {#1} \right|}}
\newcommand{\eqdist}{\stackrel{d}{=}}
\newcommand{\Rd}{\R^d}
\newcommand{\Rdnn}{{\R^d_\ge}}
\newcommand{\Rp}{\R_>}
\newcommand{\Rnn}{\R_\ge}
\newcommand{\Cbf}[2][]{\mathcal{C}_b^{#1}\left( #2 \right)}
\newcommand{\condC}{(C)}
\newcommand{\Sp}{\mathbb{S}_\ge}
\newcommand{\Sd}{\mathbb{S}}
\newcommand{\Mset}{\mathcal{M}_+}
\newcommand{\interior}[1]{\mathrm{int}({#1})}
\newcommand{\mT}{\mathbf{T}}
\newcommand{\mPi}{\matrix{\Pi}}
\newcommand{\mA}{\matrix{A}}
\renewcommand{\matrix}[1]{\mathbf{#1}}
\newcommand{\as}{\cdot}
\newcommand{\No}{\mathbb{N}_0}
\newcommand{\ma}{\matrix{a}}
\newcommand{\closure}[1]{\overline{#1}}
\renewcommand{\P}[2][]{\ensuremath{\mathbb{P}_{#1} \left( {#2} \right)}}
\begin{document}

%%%%% To ease editing, for IMPAN journals add:

 \baselineskip=17pt

%%%%%%%%%%%

%% In the running head, replace first names by initials 
%% and give an abbreviation of the title.

\title{A Note on Kesten's Choquet-Deny Lemma}

\author{Sebastian Mentemeier}
\address{Institute for Mathematical Statistics\\ University of M\"unster\\
Einsteinstra\ss e 62\\ 48149 M\"unster, Germany}
\email{mentemeier@uni-muenster.de}

\date{}

\begin{abstract}
Let $d >1$ and $(\mA_n)_{n \in \N}$ be a sequence of
independent identically distributed random $d\times d$ matrices with nonnegative entries. This induces a Markov chain $M_n = \mA_n M_{n-1}$ on the cone $\Rdnn
\setminus \{0\} = \Sp \times \Rp$. We study harmonic functions of this Markov
chain. In particular, it is shown that all bounded harmonic functions in
$\Cbf{\Sp} \otimes \Cbf{\Rp}$ are constant. The idea of the proof is originally
due to Kesten [\emph{Renewal theory for functionals of a Markov chain with general
state space}. Ann. Prob. 2 (1974), 355 -- 386], but is considerably
shortened here. {\red A similar result for invertible matrices is given as well}.
\end{abstract}

\subjclass[2010]{Primary 60K15 (Markov Renewal Processes), 60B15 (Probability
measures on groups or semigroups, Fourier transforms, factorization) ; Secondary
46A55 (Convex sets in topological linear spaces; Choquet theory)}

\keywords{Choquet-Deny Lemma, Markov Random Walks, Products of Random Matrices}

\maketitle

\section{Introduction}

Let $d >1$. Write $\Rdnn = [0, \infty)^d$  for the cone of $d$-vectors with nonnegative entries  and $\Sp := \{ x
\in \Rdnn \ : \ \abs{x}=1 \}$ for its intersection with the {\red unit sphere $\Sd$}, where $\abs{\cdot}$ is the euclidean norm on $\Rd$.
{\red A matrix $\ma \in  \Mset:=M(d \times d, \Rnn)$ is called \emph{allowable}, if it has no zero line or column. } 
Any {\red allowable} matrix leaves
$V:= \Rdnn \setminus \{0\}$ invariant and one can define its action on $\Sp$ by 
$$ \ma \as x := \frac{\ma x}{\abs{\ma x}}, \qquad x \in \Sp.$$

If $\mu$ is a probability distribution on {\red allowable matrices in $\Mset$
% satisfying
%\begin{equation}
%\label{nozerocolumn} \mu\left( \{ \ma \ : \ \ma \text{ has a zero column} \}
%\right) = 0,
%\end{equation}
then $V$ is $\mu$-a.s. invariant. Let $(\mA_n)_{n \in \N}$ be a sequence of independent identically distributed
(iid) random matrices with law $\mu$, then $M_n = \mA_n M_{n-1}$ defines a Markov chain on $V$. 

}

The aim of this note is to study the bounded harmonic functions of $(M_n)_{n \in \No}$ under some
additional equicontinuity condition {\red on the functions.}
 Besides being of interest in its own right,
the absence of nontrivial bounded harmonic functions appears prominently in the proof of Kesten's renewal theorem \cite{Kesten1974} and has been recently used in \cite{Mentemeier2013} to determine the set of fixed
points of the multivariate distributional equation
\begin{equation} Y \eqdist \mT_1 Y_1 + \dots \mT_N Y_N, \label{SFPE}
\end{equation} where $N \ge 2$ is fixed, $Y, Y_1,
\dots, Y_N$ are iid $\Rdnn$-valued random variables and independent of the random
matrices $(\mT_1, \dots, \mT_N) \in \Mset^N$.  

{\red The idea of proof is based on the Choquet-Deny lemma used by Kesten in the proof of his renewal theorem \cite[Lemma 1]{Kesten1974}. By restricting to the more specialized setting of Markov chains generated by the action of nonnegative matrices  and using recent results on products of random matrices from  \cite{BDGM2014}, this proof can be considerably shortened and, as we hope, thereby made more illuminating.
}

\section{Statement of Results}

{\red The Markov chain $(M_n)_{n \in \No}$ will be studied in the decomposition
$M_n = e^{S_n} X_n$ with $X_n \in \Sp$ and $S_n \in \R$. Note that up to an exponential transform, this corresponds to the decomposition $V=\Sp \times \Rp$. One easily deduces that
$$ X_n = \mA_n \as X_{n-1}, \qquad S_{n} - S_{n-1} = \log \abs{\mA_n
X_{n-1}},$$
hence $(X_n, S_n)_{n \in \No}$ is a Markov chain on $\Sp \times \R$ that carries the additional structure of a Markov random walk.}

Writing $\interior{A}$ for the topological interior of a set $A$, recall
that by the Perron-Frobenius Theorem, any $\ma \in \interior{\Mset}$ possesses a
unique largest eigenvalue $\lambda_\ma \in \Rp$ with corresponding normalized eigenvector
$w_\ma \in \interior{\Sp}$. Given a subsemigroup $\Gamma \subset \Mset$, write
$$ \Lambda(\Gamma) = \{ \log \lambda_\ma \ : \ \ma \in \Gamma \cap
\interior{\Mset} \}$$
for the set of eigenvalues and
$$ W(\Gamma) = \closure{ \{w_\ma \ : \ma \in \Gamma \cap \interior{\Mset}
\} }$$
for the closure of the set of normalized eigenvectors.

\begin{defin}
A subsemigroup $\Gamma \subset \Mset$ 
is said to satisfy condition $\condC$, if
\begin{enumerate}[(1)]
 \item every $\ma \in \Gamma$ is allowable and
  \item \label{2} $\Lambda(\Gamma)$ generates a dense subgroup of $\R$. 
\end{enumerate}
\end{defin}

A sufficient condition for (2) is that $\Gamma$ consists of invertible matrices and that there is no proper $\Gamma$-invariant subspace $W$ with $W \cap \Rdnn \neq \{0\}$, see the discussion in \cite{BDGM2014}.

Denote by $[\supp \mu]$ the smallest
closed semigroup of $\Mset$ generated by $\supp \mu$ and write $\Cbf{E}$ for the
set of continuous bounded functions on the space $E$. Abbreviating $\mPi_n =\mA_n
\dots \mA_1$, define for each $x \in \Sp$ a probability measure $\Prob_{x}$
on the path space of $(X_n, S_n)_{n \in \No}$ by 
$$ \Prob_{x}\Bigl((X_0, S_0, \dots, X_n, S_n) \in B \Bigr) = \Prob
\Bigl((x,0, \dots, \mPi_n \as x, \log \abs{\mPi_n  x} ) \in B \Bigr)$$
for all $n \in \N$ and measurable $B$. The corresponding expectation symbol is denoted by $\E_x$.

\begin{thm}\label{maintheorem}
Let $[\supp \mu]$ satisfy $\condC$. Assume
that $L \in \Cbf{\Sp \times \R}$ satisfies 
\begin{enumerate}[(a)]
  \item \label{1} $L(x,s) = \E_x\,{L(X_1, s- S_1)}$ for all $(x,s) \in \Sp \times
  \R$, and
  \item \label{2} for all $z \in \interior{\Sp}$, 
  $$ \lim_{y \to z} \sup_{t \in \R} \abs{L(y,t) - L(z,t)}=0 .$$ 
  \end{enumerate}
Then $L$ is constant.
\end{thm}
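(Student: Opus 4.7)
The plan is to first strip away the dependence of $L$ on the sphere variable by exploiting the strong contraction of the products $\mPi_n$ guaranteed by $\condC$, and then to invoke a classical one-dimensional Choquet--Deny argument for the resulting function of $s$.

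Iterating hypothesis (a) gives $L(x,s) = \E L(\mPi_n \as x, s - \log \abs{\mPi_n x})$ for every $n$. The second part of $\condC$, combined with the iid assumption and a Borel--Cantelli argument, ensures that $\mPi_n \in \interior{\Mset}$ almost surely for all sufficiently large $n$. Birkhoff's theorem on strict contraction of the Hilbert projective metric under strictly positive matrices then gives, for any $x,y \in \interior{\Sp}$, that $\mPi_n \as x$ and $\mPi_n \as y$ share a common almost-sure limit $Z_\infty$ lying in $\interior{\Sp}$, and that the convergence is uniform on compact subsets of $\interior{\Sp}$.

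Fixing such $x,y$, I would write $L(x,s) - L(y,s) = \E [L(\mPi_n \as x, s - \log \abs{\mPi_n x}) - L(\mPi_n \as y, s - \log \abs{\mPi_n y})]$ and insert the auxiliary term $L(\mPi_n \as y, s - \log \abs{\mPi_n x})$. The first resulting difference vanishes in $L^1$ uniformly in $s$ by hypothesis (b) applied at the common interior limit $Z_\infty$, together with dominated convergence using boundedness of $L$. The leftover piece $\E[L(\mPi_n \as y, s - \log \abs{\mPi_n x}) - L(\mPi_n \as y, s - \log \abs{\mPi_n y})]$ involves a random shift $r_n = \log \abs{\mPi_n x} - \log \abs{\mPi_n y}$, which by the Hilbert-metric contraction converges almost surely to a generally nontrivial limit $r_\infty$ rather than to zero. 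This is the main obstacle.

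To overcome it, I would pass to the martingale limit: by (a) and boundedness, $L(X_n, s - S_n)$ is a bounded martingale under $\Prob_{x,s}$ and converges almost surely to some $L_\infty$ with $L(x,s) = \E L_\infty$. A coupling argument exploiting that, once $\mPi_n$ has entered $\interior{\Mset}$, the trajectory forgets its initial sphere coordinate by the Birkhoff contraction, shows that the law of $L_\infty$ is independent of $x \in \interior{\Sp}$; taking expectations gives $L(x,s) = L(y,s)$ on $\interior{\Sp}$, and continuity of $L$ propagates this to all of $\Sp$. Writing $L(x,s) = \phi(s)$, (a) then reduces to $\phi(s) = \int \phi(s - \log \abs{\ma x}) \, d\mu(\ma)$ for every $x \in \Sp$, and $\condC$ supplies enough elements of $[\supp \mu] \cap \interior{\Mset}$ with distinct Perron eigenvalues to guarantee that the closed subgroup of $\R$ generated by the support of the one-step increment is all of $\R$; the classical Choquet--Deny lemma for real random walks then forces $\phi$, and hence $L$, to be constant.
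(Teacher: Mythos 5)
Your plan runs the two reductions in the opposite order from the paper --- you try to eliminate the sphere variable first and then appeal to a one-dimensional Choquet--Deny argument, whereas the paper first proves that $L$ is constant in the real variable (via near-periodicity) and only then eliminates the sphere variable by ergodicity of $(X_n)$. Unfortunately your order of operations runs into exactly the obstacle you yourself flag, and the ``coupling argument'' you offer to get around it does not in fact do so.

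The precise difficulty is this. Even after the trajectories from $x$ and $y$ have coalesced on the sphere, the additive functionals $S_n^x = \log\abs{\mPi_n x}$ and $S_n^y = \log\abs{\mPi_n y}$ differ by $r_n \to r_\infty$, a \emph{random nonzero} constant. So the joint process $(X_n, s - S_n)$ started from $x$ is asymptotically coupled with the one from $y$ only after a shift by $r_\infty$ in the second coordinate. Consequently the martingale limits $L_\infty$ under $\Prob_{x,s}$ and $\Prob_{y,s}$ are asymptotically related by $L(X_n, s - S_n^x) \approx L(X_n, s - r_\infty - S_n^y)$; the law of $L_\infty$ is therefore tied to $L(y, s - r_\infty)$, not to $L(y,s)$. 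Your assertion that ``the trajectory forgets its initial sphere coordinate by the Birkhoff contraction'' addresses the first coordinate but is silent on this persistent second-coordinate shift --- it is the same gap restated, not a resolution of it. (There is also a minor inaccuracy upstream: the forward products $\mPi_n \as x$ do not converge almost surely; only the projective distance between orbits from two starting points does. You would have to pass to the reversed products $\mA_1\cdots\mA_n$ to get an a.s.\ limit $Z_\infty$, which has the same law but is not the Markov chain $X_n$. The $r_\infty$ problem survives this fix.)

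The paper avoids the issue entirely by never comparing $L(x,\cdot)$ and $L(y,\cdot)$ directly. Instead it fixes a single $x$, uses the bounded-martingale convergence of $L(X_n, s-S_n)$ under $\Prob_x$, and exploits Lemma~\ref{lem:aperiodic} together with the i.o.\ recurrence \eqref{eq:i.o.}: infinitely often one finds $n$ and $n+m$ with $X_n$ and $X_{n+m}$ both near the Perron vector $z$ and $S_{n+m} - S_n$ near $\zeta_i$. Property $(c)$ (a smoothed form of $(b)$) then forces the martingale limits started at $s$ and at $s + \zeta_i$ to agree, giving $L(x,s) = L(x, s+\zeta_i)$. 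Since the $\zeta_i$ generate a dense subgroup, $L$ is constant in $s$; the residual function on $\Sp$ is harmonic for the uniquely ergodic chain $(X_n)$ and hence constant. This route sidesteps the random-shift obstruction because the comparison is along a single trajectory, where the increment of $S$ over a return cycle is controlled by the Perron eigenvalue, rather than across two trajectories whose additive parts decouple by an uncontrolled $r_\infty$. To repair your argument you would essentially have to re-derive a statement of this kind anyway; as written, the key ``taking expectations gives $L(x,s) = L(y,s)$'' step is not justified.

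Finally, even granting $L(x,s) = \phi(s)$, your appeal to classical Choquet--Deny needs care: the one-step increment $\log\abs{\mA_1 x}$ depends on $x$, and condition $\condC$ gives non-arithmeticity of the set of Perron log-eigenvalues, which controls $n$-step increments near the corresponding eigenvectors rather than the one-step increment at a generic $x$. This too would need the renewal-theoretic machinery of Lemma~\ref{lem:aperiodic}, so the detour through $\phi$ buys nothing over the paper's direct argument.
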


It is interesting to observe that $(b)$ is an equicontinuity property for the family $(L(\cdot,t))_{t \in \R}$ of functions in $\C_b(\Sp)$. In fact, the Arzel\`{a}-Ascoli theorem is applicable and yields that for all $t \in \R$
$$ \lim_{s \to t} \sup_{y \in \R} \abs{L(y,s)-L(y,t)}=0.$$
Each pair of functions $f \in \Cbf{\Sp}$, $h \in \Cbf{\R}$ defines a composite function
$f \otimes h \in \Cbf{\Sp \times \R}$ by $(f \otimes h)\, (u,s):=f(u)h(s)$. 
Write $\Cbf{\Sp} \otimes \Cbf{\R}$ for the set of all finite linear combinations of such
functions (tensor product). Then the following corollary is obvious:

\begin{cor}
Let $[\supp \mu]$ satisfy condition $\condC$. If $L
\in \Cbf{\Sp} \otimes \Cbf{\R}$ is
harmonic for the Markov chain $(X_n, S_n)_{n \in \No}$, then $L$ is constant.
\end{cor}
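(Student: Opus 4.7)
The plan is to deduce the corollary directly from Theorem \ref{maintheorem} by checking that the tensor product structure automatically forces the uniform-in-$t$ continuity condition (b) of the theorem.

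Suppose $L = \sum_{i=1}^{k} f_i \otimes h_i$ with $f_i \in \Cbf{\Sp}$ and $h_i \in \Cbf{\R}$. First, $L$ is continuous and bounded on $\Sp \times \R$, since each summand is a product of bounded continuous functions and finite sums preserve this. The harmonicity assumption is exactly condition (a) of Theorem \ref{maintheorem}, so nothing needs to be done there.

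To verify condition (b), I would use that $\Sp$ is compact (a closed subset of the unit sphere), hence each $f_i$ is uniformly continuous on $\Sp$, and in particular continuous at every point $z \in \interior{\Sp}$. For any such $z$ the linearity of the tensor product gives
$$\sup_{t \in \R} \abs{L(y,t) - L(z,t)} \;\le\; \sum_{i=1}^{k} \abs{f_i(y)-f_i(z)} \cdot \norm{h_i}_\infty,$$
and the right-hand side tends to $0$ as $y \to z$ since each $h_i$ is bounded and each $f_i$ is continuous at $z$. Thus hypothesis (b) holds.

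With both (a) and (b) in place, Theorem \ref{maintheorem} immediately yields that $L$ is constant. There is no genuine obstacle: the only content of the corollary is the observation that compactness of $\Sp$ together with boundedness of each $h_i$ makes the continuity on $\Sp$ uniform in the $\R$-variable for free, so the $\otimes$-structure is strong enough to satisfy the continuity assumption of the main theorem.
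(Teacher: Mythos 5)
Your argument is correct and is exactly the intended one: the paper declares the corollary ``obvious'' after Theorem \ref{maintheorem}, and the content you supply—that boundedness of each $h_i$ together with continuity of each $f_i$ at $z$ gives the uniform-in-$t$ condition (b) via $\sup_{t\in\R}\abs{L(y,t)-L(z,t)}\le\sum_i\abs{f_i(y)-f_i(z)}\,\norm{h_i}_\infty$—is precisely the observation being left to the reader. (The appeal to compactness/uniform continuity of $f_i$ is harmless but not needed; pointwise continuity at $z$ suffices, as you note.)
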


The further organisation of the paper is as follows. At first, we repeat for
the readers convenience important implications of $\condC$, based on
\cite{BDGM2014}. Then we turn to the proof of the main theorem. It will be
assumed throughout that $d>1$ and that $[\supp \mu]$ satisfies $\condC$.
{\red Finally, we describe briefly how to extend the result to Markov chains on $\Rd$ generated by the action of invertible matrices.}

\section{Implications of Condition $\condC$}

Under each $\Prob_x$, $x \in \Sp$, $(X_n)_{n \in \No}$ constitutes a Markov
chain with transition operator $P : \Cbf{\Sp} \to \Cbf{\Sp}$ defined by
$$ Pf(y) = \int f(\ma \as y)\ \mu(d\ma) = \E f(\mA_1 \as y) , \qquad y \in \Sp.$$
Abbreviating $\Gamma = [ \supp \mu]$, we have the following result.

\begin{prop}[{\cite[Lemma 4.4]{BDGM2014}}]\label{prop:ip}
There is a unique
$P$- stationary probability measure $\nu$ on $\Sp$, and $\supp \nu = W(\Gamma)$. 
\end{prop}

Since $\Sp$ is compact, the uniqueness of $\nu$ implies the following ergodic
theorem (see \cite{Breiman1960})
\begin{equation} \label{eq:ergodic}
\lim_{n \to \infty} \frac{1}{n} \sum_{k=1}^n f(X_k) = \int f(y)\ \nu(dy) \qquad
\Pfs[x]
\end{equation}
for all $x \in \Sp$, $f \in \Cbf{\Sp}$.

Property (2) of $\condC$ implies the following ``weak'' aperiodicity property
of $(S_n)_{n \in \No}$, which is an adaption of condition $I.3$ in
\cite{Kesten1974}. As usual, $B_\epsilon(z) := \{y \in E : \abs{y-z} < \epsilon\}$ for $\epsilon >0$, $z \in E$.

\begin{lem}\label{lem:aperiodic}
There exists a sequence $(\zeta_i)_{i \in \N} \subset \R$ such that the group
generated by $(\zeta_i)_{i \in \N}$ is dense in $\R$ and such that for each
$\zeta_i$ there exists $z \in \interior{\Sp}$ with
the following properties:
\begin{enumerate}
  \item $\nu\bigl(B_\epsilon(z)\bigr) >0$ for all $\epsilon >0$.
  \label{recurrence_property}
\item
 For all $\delta >0$ there is $\epsilon_\delta >0$ such
that for all $\epsilon \in (0, \epsilon_\delta)$ there are $m \in \N$ and $\eta
>0$, such that for $B:=B_\epsilon(z)$:
\begin{align}\label{eq:aper}
\P[x]{ X_m \in B, \ \abs{S_{m} - \zeta_i} < \delta } \ge \eta \quad \text{ for
all } x \in B.
\end{align}
\end{enumerate}
\end{lem}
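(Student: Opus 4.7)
The plan is to extract the sequence $(\zeta_i)_{i \in \N}$ from the set $\Lambda(\Gamma)$ and to take the associated Perron-Frobenius eigenvectors as the corresponding points $z$. By Proposition \ref{prop:ip}, $\Lambda(\Gamma)$ generates a dense subgroup of $\R$, so one can choose a countable family $(\ma_i)_{i \in \N} \subset \Gamma \cap \interior{\Mset}$ whose logarithmic eigenvalues $\zeta_i := \log \lambda_{\ma_i}$ still generate a dense subgroup of $\R$. For each index $i$, set $z := w_{\ma_i} \in \interior{\Sp}$. Property (1) is then immediate, since $w_{\ma_i} \in W(\Gamma) = \supp \nu$ by Proposition \ref{prop:ip}.

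The heart of the argument is property (2). Two observations drive it: first, $\ma_i \as z = z$ (as $w_{\ma_i}$ is the Perron-Frobenius eigenvector) and $\log \abs{\ma_i z} = \zeta_i$; second, because $\ma_i \in \interior{\Mset}$ has a simple dominant eigenvalue, the differential of the map $y \mapsto \ma_i \as y$ at $z$ is a strict contraction on the tangent space to $\Sp$ at $z$, so $\ma_i \as $ maps a sufficiently small Euclidean ball around $z$ into itself. Invoking joint continuity of $(\mA,x) \mapsto \mA \as x$ and $(\mA,x) \mapsto \log \abs{\mA x}$ on $\interior{\Mset} \times \Sp$, one should be able to choose $\epsilon_\delta > 0$ and an open neighborhood $U$ of $\ma_i$ in $\interior{\Mset}$ such that for every $\mA \in U$, every $\epsilon \in (0, \epsilon_\delta)$, and every $x \in B_\epsilon(z)$, both $\mA \as x \in B_\epsilon(z)$ and $\abs{\log \abs{\mA x} - \zeta_i} < \delta$ hold.

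It then remains to realise $\mPi_m \in U$ with positive probability. Since $\ma_i \in \Gamma = [\supp \mu]$, there exist $m \in \N$ and $\ma_1', \dots, \ma_m' \in \supp \mu$ whose product lies in $U$. Continuity of matrix multiplication supplies open neighborhoods $V_k$ of $\ma_k'$ with $V_m \cdots V_1 \subset U$; since each $\ma_k' \in \supp \mu$, independence yields $\eta := \prod_{k=1}^m \Prob(\mA_k \in V_k) > 0$. On the event $\bigcap_{k=1}^m \{\mA_k \in V_k\}$ one has $\mPi_m \in U$, so the two properties from the previous paragraph give $X_m \in B_\epsilon(z)$ and $\abs{V_m - \zeta_i} < \delta$ simultaneously, uniformly in the initial condition $x \in B_\epsilon(z)$.

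The main obstacle I anticipate is the invariance claim $\mA \as B_\epsilon(z) \subset B_\epsilon(z)$: the map $\ma_i \as $ is naturally a contraction in the Hilbert projective metric rather than the Euclidean one, so one must pin down $\epsilon_\delta$ small enough that a Euclidean ball is preserved. This should follow from a local Taylor expansion at the fixed point $z$, exploiting the spectral gap provided by Perron-Frobenius. Once that invariance is secured, the remainder of the proof reduces to routine continuity combined with the defining property of $\supp \mu$.
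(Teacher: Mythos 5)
Your approach follows the paper's proof closely: extract $(\zeta_i)$ from $\Lambda(\Gamma)$ using Proposition~\ref{prop:ip}, take $z = w_{\ma_i}$ to settle property~(1), use the contraction of $\ma_i \as$ near its fixed point to establish invariance of small balls, and realise a nearby product of matrices from $\supp\mu$ with positive probability. The obstacle you flag (Euclidean vs.\ projective contraction) is real and is resolved in the paper exactly as you suggest, via the spectral gap of $\ma_i$; the paper states the stronger conclusion $\ma_i \as B_\epsilon(z) \subset B_{\epsilon/2}(z)$ for $\epsilon$ sufficiently small.

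There is, however, a quantifier error in the perturbation step that you did not flag. You claim to find a single neighbourhood $U$ of $\ma_i$ (and hence a single $m$ and $\eta$) such that $\mA \as B_\epsilon(z) \subset B_\epsilon(z)$ for \emph{every} $\epsilon \in (0, \epsilon_\delta)$ and every $\mA \in U$. This cannot hold: if $\mA \in U$ has Perron--Frobenius eigenvector $w_\mA \neq z$, then $\mA \as z \neq z$, so $\abs{\mA \as x - z}/\abs{x-z} \to \infty$ as $x \to z$, and $B_\epsilon(z)$ is not invariant under $\mA \as$ once $\epsilon$ is small enough. The neighbourhood $U$ must shrink as $\epsilon$ does: one needs $\sup_{x \in B_\epsilon(z)} \abs{\mA \as x - \ma_i \as x} < \epsilon/2$, and the required closeness of $\mA$ to $\ma_i$ depends on $\epsilon$. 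The paper handles this correctly by choosing $\gamma = \gamma(\epsilon)$ after $\epsilon$ is fixed; the lemma's conclusion deliberately lets $m$ and $\eta$ depend on $\epsilon$, precisely so that this works. The fix to your argument is local and preserves its structure, but as written the step fails.
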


The first property together with \eqref{eq:ergodic} entails that $B$ is a
recurrent set for $(X_n)_{n \in \N}$. 
By a geometric trials argument (see e.g. \cite[Problem 5.10]{Breiman1968}), it
follows that for all  $\delta >0$ and sufficiently small $\epsilon>0$ there is
$m \in \N$ such that
\begin{equation} \label{eq:i.o.}
\P[x]{  \abs{X_n-z} < \epsilon, \abs{X_{n+m}-z} < \epsilon, \abs{S_n -
(S_{n+m} - \zeta_i)} < \delta  \text{ i.o.
%\footnote{i.o. stands for ``infinitely often'' and here refers to $N$.}
 } }=1
\end{equation}

We repeat the short
proof of Lemma \ref{lem:aperiodic} from \cite[Prop. 7.5]{BDGM2014}, for it
clarifies the importance of part (2) of $\condC$ and moreover, we want to strengthen
the result a bit.

\begin{proof}
By part (2) of $\condC$, the set $\Lambda(\Gamma)$ generates a dense
subgroup of $\R$, hence it contains a countable sequence $(\zeta_i)$ which still
generates a dense subgroup. Fix $\zeta_i$. Then $\zeta_i = \log \lambda_\ma$ for some $\ma \in
\Gamma \cap \interior{\Mset}$, set $$z := w_\ma \in W(\Gamma) \cap
\interior{\Sp}.$$ Referring  to Prop. \ref{prop:ip}, $z \in \supp \nu$, thus
\eqref{recurrence_property} follows. 

Now fix $\delta >0$. Then for all $\epsilon >0$ sufficiently small, since $w_\ma$ is a Perron-Frobenius eigenvector,
\begin{align*}
 \ma \as B_{\epsilon}(w_\ma) \subset B_{\epsilon/2}(w_\ma),& \\
 \abs{\log \lambda_\ma - \log \abs{\ma x}} < \delta/2 & \qquad
 \text{ for all } x \in B_{\epsilon}(w_\ma).
\end{align*}

Since $\ma \in [\supp
\mu]$, there is $m \in \N$ such that $\ma = \ma_m \dots \ma_1$, $\ma_j \in
\supp \mu$, $1 \le j \le m$, hence for all $\gamma>0$,
$$ \P{ \mA_n \cdots \mA_1 \in B_\gamma(\ma) } =
\eta_\gamma >0 .$$ If $\gamma >0$ is chosen sufficiently small, then for all
$\ma' \in B_\gamma(\ma)$, 
\begin{align*}
\ma' \as B_{\epsilon}(w_\ma) \subset B_{\epsilon}(w_\ma),& \\
\abs{\log \lambda_\ma - \log \abs{\ma' x}} < \delta & \qquad
 \text{ for all } x \in B_{\epsilon}(w_\ma).
\end{align*}
Consequently, for all $x
\in B_{\epsilon}(w_\ma)$,
\begin{equation*}
\P{ \abs{ \mPi_n \as x - w_\ma } < \epsilon, \
\abs{\log \abs{\mPi_n x} - \log \lambda_\ma} < \delta } \ge
\eta_\gamma > 0. 
\end{equation*}
Recalling the definition of $\Prob_x$, this gives \eqref{eq:aper}.
\end{proof}

\section{Proof of the Main Theorem}

Let $L \in \Cbf{\Sp \times \R}$. For a compactly supported function $h \in
\Cbf{\R}$ define $$ L_h(x,s) = \int L(x,s+r)\, h(r)\, dr .$$
If for each such $h$, $L_h$ is constant, then the same holds true for $L$ itself
-- this can be seen by choosing a sequence $h_n$ of probability densities, such
that $h_n(r)\, dr$ converges weakly towards the dirac measure in 0. 

\begin{lem}
Let $L \in \Cbf{\Sp \times \R}$ satisfy properties (a),(b) of Theorem
\ref{maintheorem}. Then for any compactly supported $h \in \Cbf{\R}$, $L_h$
still satisfies (a),(b) and moreover: \begin{enumerate}[(c)]
  \item For all $z \in \interior{\Sp}$,
$$
\lim_{y \to z} \lim_{\delta \downarrow 0} \sup_{\abs{t-t'}< \delta}
\abs{L_h(z,t)-L_h(y,t')} = 0.$$
\end{enumerate}
\end{lem}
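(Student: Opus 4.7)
The plan is to verify each property in turn, using Fubini for (a), a direct uniform estimate for (b), and exploiting the smoothing effect of convolution for (c).

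For property (a), writing out the expectation and using Fubini gives
\begin{equation*}
\E_x L_h(X_1, s - S_1) = \int \E_x L(X_1, (s+r) - S_1)\, h(r)\, dr = \int L(x,s+r)\, h(r)\, dr = L_h(x,s),
\end{equation*}
where the middle equality uses (a) for $L$ at the shifted level $s+r$. For property (b), one simply estimates
\begin{equation*}
\abs{L_h(y,t) - L_h(z,t)} \le \norm{h}_1 \cdot \sup_{s \in \R} \abs{L(y,s) - L(z,s)},
\end{equation*}
and the right-hand side is independent of $t$ and tends to $0$ as $y \to z$ by (b) for $L$.

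The heart of the lemma is (c). I would use a triangle inequality to split
\begin{equation*}
\abs{L_h(z,t) - L_h(y,t')} \le \abs{L_h(z,t) - L_h(y,t)} + \abs{L_h(y,t) - L_h(y,t')}.
\end{equation*}
The first summand is handled by (b) for $L_h$: it is at most $\sup_{s \in \R}\abs{L_h(z,s)-L_h(y,s)}$, which goes to $0$ as $y \to z$, independently of $t$ (and of $t'$). For the second summand, a change of variable $r \mapsto r - (t'-t)$ in one of the integrals yields
\begin{equation*}
\abs{L_h(y,t') - L_h(y,t)} \le \norm{L}_\infty \int \abs{h(r - (t'-t)) - h(r)}\, dr,
\end{equation*}
and since $h$ is continuous with compact support, continuity of translation in $L^1(\R)$ gives
\begin{equation*}
\lim_{\delta \downarrow 0} \sup_{\abs{t-t'} < \delta} \norm{h(\cdot - (t'-t)) - h}_1 = 0,
\end{equation*}
a bound that is uniform in $y$ and $t$. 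Taking $\delta \downarrow 0$ first kills the second summand, and then letting $y \to z$ kills the first, yielding (c).

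The only mild obstacle is bookkeeping the order of limits in (c): one must ensure that the $t$-regularity coming from convolution with $h$ produces a bound uniform in $y$, so that it survives the outer limit $y \to z$; this is automatic because the estimate on $\abs{L_h(y,t) - L_h(y,t')}$ depends on $y$ only through $\norm{L}_\infty$.
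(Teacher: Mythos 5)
Your proposal is correct and follows essentially the same route as the paper: Fubini for (a), a uniform $L^1$ bound for (b), and for (c) the identical triangle-inequality split $\abs{L_h(z,t)-L_h(y,t')}\le\abs{L_h(z,t)-L_h(y,t)}+\abs{L_h(y,t)-L_h(y,t')}$, with the second term controlled by continuity of translation in $L^1(\R)$ uniformly in $y$. The only cosmetic difference is that you justify (b) by a direct $\norm{h}_1$-estimate where the paper invokes Fatou's lemma, but the content is the same.
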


\begin{proof}
That $(a)$ and $(b)$ persist to hold for $L_h$ is a simple consequence of
Fubini's theorem resp. Fatou's lemma.

In order to prove $(c)$, let $\abs{L} \le C$. Consider
\begin{align*}
& \lim_{\delta \to 0} \sup_{y \in \Sp} \sup_{\abs{t-t'}< \delta}
\abs{L_h(y,t) - L_h(y,t')} \\
= &  \lim_{\delta \to 0} \sup_{y \in \Sp} \sup_{\abs{t-t'}< \delta}
\abs{\int L(y,t'+r) h(r-(t-t')) dr - \int L(y,t'+r) h(r) dr } \\
\le &  \lim_{\delta \to 0} \sup_{\abs{t-t'}< \delta}
C \int \abs{h(r-(t-t')) - h(r)  } dr = 0,
\end{align*}
where the uniform continuity of $h$ was taken into account for the last line.
Combine this with $(b)$ to obtain for all $z \in \interior{\Sp}$,
\begin{align*}
& \lim_{y \to z} \lim_{\delta \downarrow 0} \sup_{\abs{t-t'}< \delta}
\abs{L_h(z,t)-L_h(y,t')} \\
\le & \lim_{y \to z} \lim_{\delta \downarrow 0} \sup_{\abs{t-t'}< \delta}
\Bigl[ \abs{L_h(z,t)-L_h(y,t)} + \abs{L_h(y,t) - L_h(y,t')} \Bigr] \\
\le & \lim_{y \to z} \sup_{t \in \R} \abs{L_h(z,t)-L_h(y,t)} + \lim_{\delta \to
0} \sup_{y \in \Sp} \sup_{\abs{t - t'} < \delta} \abs{L_h(y,t) - L_h(y,t')}
= 0 .
\end{align*}
\end{proof}

Consequently, in order to proof Theorem \ref{maintheorem}, we may w.l.o.g.
assume that $L$ satisfies properties $(a) - (c)$.

\begin{proof}[Proof of Theorem \ref{maintheorem}]
The burden of the proof is to show that for all the $\zeta_i$ of Lemma
\ref{lem:aperiodic},
\begin{equation}\label{thismustbeshown} L(x,s) = L(x,s+\zeta_i) \quad \text{
for all } (x,s) \in \Sp \times \R .
\end{equation}
If this holds true, then for any $\sigma=\sum_{i=1}^N c_i \zeta_i$ with $c_i \in
\No$, $N \in \N$ $$ L(x,s)=L(x, s+ \sigma ) \quad \text{ for all } (x,s) \in \Sp \times \R
.$$ But the set of $\sigma$'s is dense in $\R$, thus by the continuity of $L$,
$$ L(x,s)=L(x,0) \quad \text{
for all } (x,s) \in \Sp \times \R .$$
Hence $L(x,s)$ reduces to a function $\tilde{L}$ on $\Sp$, which is then
bounded harmonic for the ergodic (see
\eqref{eq:ergodic}) Markov chain $(X_n)_{n \in \No}$, thus $\tilde{L}$ is constant. 

Now we are going to prove \eqref{thismustbeshown}. Considering $(a)$, $L(X_n, s-
S_n)_{n \in \No}$ constitutes a bounded, hence a.s. convergent martingale under
each $\Prob_x$ with
\begin{equation}\label{pointwise convergence} L(x,s) = \E_x \lim_{n \to \infty}
L(X_n, s - S_n) \quad \text{
for all } (x,s) \in \Sp \times \R .
\end{equation}
Fix any $\zeta_i$ and the corresponding $z \in \interior{\Sp}$, defined in Lemma
\ref{lem:aperiodic}. Referring to $(c)$, for all $\xi >0$, there are $\delta,
\epsilon >0$ such that $$ \sup_{u,y \in B_\epsilon(z)} \sup_{\abs{t-t'}< \delta} \abs{L(u,t) -
L(y,t')} < \xi .$$
Combining this with \eqref{eq:i.o.}, we infer that for all $s \in \R$,
$$ \P[x]{ \abs{L(X_n, s -S_n) - L(X_{n+m}, s + \zeta_i - S_{n+m})} < \xi \text{
i.o.}} = 1 .$$
Hence for all $(x,s) \in \Sp \times \R$,
$$ \lim_{n \to \infty} L(X_n, s- S_n) = \lim_{n \to \infty} L(X_n, s + \zeta_i
-S_n) \qquad \Pfs[x] $$
and consequently, using \eqref{pointwise convergence}, it follows for all
$(x,s) \in \Sp \times \R$ $$ L(x,s) = \E_x \lim_{n \to \infty} L(X_n, s - S_n) =
\E_x \lim_{n \to \infty} L(X_n, s + \zeta_i - S_n) = L(x, s + \zeta_i) .$$
\end{proof}

\section{Invertible Matrices}

Let us finally mention that a result similar to Theorem \ref{maintheorem} holds for invertible matrices: In the following, let $\mu$ be a distribution on $GL(d, \R)$.

\begin{defin}
A subsemigroup $\Gamma \in GL(d,\R)$ is said to be \emph{irreducible-proximal (i-p)}, if
\begin{enumerate}
\item no finite union $W=\bigcup_{i=1}^n W_i$ of proper subspaces $\{0\} \subsetneq W_i \subsetneq \Rd$ satisfies $\Gamma W \subset W$ (irreducibility) and
\item there is $g \in \Gamma$ having a algebraically simple dominant eigenvalue $\lambda_g \in \R$
 such that $|{\lambda_g}| = \lim_{n \to \infty} \|{g^n}\|^{1/n}$ (proximality). 
\end{enumerate}
\end{defin}

This condition has been studied intensively by Guivarc'h and Le Page \cite{Guivarc'h2004,Guivarch2012,Guivarch2013}. Considering condition \emph{(i-p)}, the aperiodicity condition (2) follows from \cite[Proposition 2.5]{Guivarch2012}, which also gives the support of the measure $\nu$, corresponding to Proposition \ref{prop:ip}.Then following the lines of the proof of Theorem \ref{maintheorem}, one obtains the following:
\begin{thm}
Let $[\supp \mu]$ satisfy {(i-p)}. Assume
that $L \in \Cbf{\Sd \times \R}$ satisfies 
\begin{enumerate}[(a)]
  \item \label{1} $L(x,s) = \E_x\,{L(X_1, s- S_1)}$ for all $(x,s) \in \Sd \times
  \R$, and
  \item \label{2} for all $z \in \Sd$, 
  $$ \lim_{y \to z} \sup_{t \in \R} \abs{L(y,t) - L(z,t)}=0 .$$ 
  \end{enumerate}
Then $L$ is constant.
\end{thm}

\subsection*{Acknowledgements}
This research was supported by Deutsche Forschungsgemeinschaft (SFB 878).

\nocite{Breiman1960, Kesten1973, Kesten1974, Kluppelberg2003, Mentemeier2013}

\bibliographystyle{amsplain}   
\bibliography{literatur}

\providecommand{\bysame}{\leavevmode\hbox to3em{\hrulefill}\thinspace}
\providecommand{\MR}{\relax\ifhmode\unskip\space\fi MR }
% \MRhref is called by the amsart/book/proc definition of \MR.
\providecommand{\MRhref}[2]{%
  \href{http://www.ams.org/mathscinet-getitem?mr=#1}{#2}
}
\providecommand{\href}[2]{#2}
\begin{thebibliography}{10}

\bibitem{Breiman1960}
Leo Breiman, \emph{The strong law of large numbers for a class of {M}arkov
  chains}, Ann. Math. Statist. \textbf{31} (1960), 801--803. \MR{0117786 (22
  \#8560)}

\bibitem{Breiman1968}
\bysame, \emph{Probability}, Addison-Wesley, 1968. \MR{0229267 (37 \#4841)}

\bibitem{BDGM2014}
Dariusz {Buraczewski}, Ewa {Damek}, Yves {Guivarc'h}, and Sebastian
  {Mentemeier}, \emph{{On multidimensional Mandelbrot's cascades}},
  arXiv:1109.1845v2.

\bibitem{Guivarch2012}
Yves {Guivarc'h} and {\'E}mile {Le Page}, \emph{{Spectral gap properties and
  asymptotics of stationary measures for affine random walks}},
  arXiv:1204.6004.

\bibitem{Guivarc'h2004}
Yves Guivarc'h and {\'E}mile Le~Page, \emph{Simplicit\'e de spectres de
  {L}yapounov et propri\'et\'e d'isolation spectrale pour une famille
  d'op\'erateurs de transfert sur l'espace projectif}, Random walks and
  geometry, Walter de Gruyter GmbH \& Co. KG, Berlin, 2004, pp.~181--259.
  \MR{2087783 (2006f:37029)}

\bibitem{Guivarch2013}
Yves Guivarc'h and {\'Emile}~Le~Page, \emph{Homogeneity at infinity of
  stationary solutions of multivariate affine stochastic recursions}, Random
  Matrices and Iterated Random Functions: M{\"u}nster, October 2011
  (Matthias~L{\"o}we Gerold~Alsmeyer, ed.), Springer Proceedings in Mathematics
  \& Statistics, vol.~53, Springer, 2013.

\bibitem{Kesten1973}
Harry Kesten, \emph{{Random difference equations and renewal theory for
  products of random matrices}}, Acta Math. \textbf{131} (1973), 207--248.
  \MR{0440724 (55 \#13595)}

\bibitem{Kesten1974}
\bysame, \emph{{Renewal theory for functionals of a Markov chain with general
  state space}}, Ann. Prob. \textbf{2} (1974), no.~3, 355--386. \MR{0365740 (51
  \#1992)}

\bibitem{Kluppelberg2003}
Claudia Kl{\"u}ppelberg and Serguei Pergamenchtchikov, \emph{Renewal theory for
  functionals of a {M}arkov chain with compact state space}, Ann. Prob.
  \textbf{31} (2003), no.~4, 2270--2300. \MR{2016619 (2005e:60204)}

\bibitem{Mentemeier2013}
Sebastian Mentemeier, \emph{On multivariate stochastic fixed point equations :
  the smoothing transform and random difference equations}, Ph.D. thesis,
  University of Muenster, 2013.

\end{thebibliography}

%\begin{thebibliography}{1}
%
%\bibitem{Breiman1960}
%{\sc L.~Breiman}, {\em The strong law of large numbers for a class of {M}arkov
%  chains}, Ann. Math. Statist., 31 (1960), pp.~801--803.
%
%\bibitem{Breiman1968}
%\leavevmode\vrule height 2pt depth -1.6pt width 23pt, {\em Probability},
%  Addison-Wesley, 1968.
%
%\bibitem{BDG2011}
%{\sc D.~{Buraczewski}, E.~{Damek}, and Y.~{Guivarc'h}}, {\em {On
%  multidimensional Mandelbrot's cascades}}, ArXiv e-prints,  (2011).
%\newblock available online at http://arxiv.org/abs/1109.1845.
%
%\bibitem{Kesten1973}
%{\sc H.~Kesten}, {\em {Random difference equations and renewal theory for
%  products of random matrices}}, Acta Math., 131 (1973), pp.~207--248.
%
%\bibitem{Kesten1974}
%\leavevmode\vrule height 2pt depth -1.6pt width 23pt, {\em {Renewal theory for
%  functionals of a Markov chain with general state space}}, Ann. Prob., 2
%  (1974), pp.~355--386.
%
%\bibitem{Kluppelberg2003}
%{\sc C.~Kl{\"u}ppelberg and S.~Pergamenchtchikov}, {\em Renewal theory for
%  functionals of a {M}arkov chain with compact state space}, Ann. Prob., 31
%  (2003), pp.~2270--2300.
%
%\bibitem{Mentemeier2013}
%{\sc S.~Mentemeier}, {\em On multivariate stochastic fixed point equations :
%  the smoothing transform and random difference equations}, PhD thesis,
%  University of Muenster, 2013.
%
%\end{thebibliography}

\end{document}